\theoremstyle{definition}
\newtheorem{definition}{Definition}
\theoremstyle{theorem}
\newtheorem{theorem}{Theorem}
\theoremstyle{lemma}
\newtheorem{lemma}{Lemma}
\theoremstyle{example}
\newtheorem{example}{Example}
\theoremstyle{remark}
\newtheorem{remark}{Remark}
\theoremstyle{prop}
\newtheorem{prop}{Proposition}
\begin{document}

\begin{frontmatter}



\title{A new Representation of $\alpha$-Bernstein Operators }


\author{Jamshid  Saeidian \corref{cor1}}
\ead{j.saeidian@khu.ac.ir}
\author{ Bahareh Nouri}

 \address{Faculty of Mathematical Sciences and Computer, Kharazmi University, No. 50,  Taleghani ave., Tehran, Iran.}

\cortext[cor1]{Corresponding author}
%

\begin{abstract}

The $\alpha$-Bernstein operators were initially introduced in the paper by Chen, X., Tan, J., Liu, Z., Xie, J. (2017) titled "Approximation of Functions by a New Family of Generalized Bernstein Operators" (Journal of Mathematical Analysis and Applications, 450(1), 244-261). Since their introduction, these operators have served as a source of inspiration for numerous research endeavors. In this study,  we propose a novel  technique, founded on a recursive relation,  for constructing Bernstein-like bases. A special case of this new representation yields a novel portrayal of Chen's operators. This innovative representation enables the discovery of additional properties of $\alpha$-Bernstein operators and facilitates alternative and more straightforward proofs for certain theorems.
\end{abstract}



\begin{keyword}
Bernstein-like operator \sep  Generalized operators \sep Positive linear operator \sep Shape-preserving approximation \sep Voronovskaja-type theorem

\MSC[2020]  41A25 \sep 26A15 \sep  47A58 \sep 41A10

\end{keyword}

\end{frontmatter}


\section{Introduction}  \label{}

The Bernstein polynomials, rooted in the fertile ground of approximation theory, stand as an enduring pillar of mathematical research and practice. These polynomials, introduced by S.N. Bernstein in 1912, not only define a fundamental class of algebraic polynomials but also wield a profound influence in mathematical analysis. It was through the lens of Bernstein polynomials that S.N. Bernstein provided the first constructive proof of Weierstrass' approximation theorem, a seminal contribution that continues to reverberate through the annals of mathematical history \cite{farouki2012bernstein}.

Defined as:
\[B_{n,k}(z) ={n \choose k} z^k(1-z)^{n-k}, ~~k=0,...,n ~~ \& ~~ n\in \mathbb{N},\]
these polynomials are instrumental in approximating functions over the interval \([0,1]\). The core of this approximation is encapsulated in  Bernstein operators with the formula:
\[ \mathcal{B}_n(f;z) =\sum_{k=0}^n B_{n,k}(z)f(\frac{k}{n}),  ~~~ z\in [0,1],\]
where a weighted sum of function values at equidistant points on the interval offers a versatile tool for approximation.

In 2017, Chen et al. \cite{chen2017approximation} proposed an innovative modification of Bernstein operators, centered around the concept of $\alpha$-Bernstein polynomials as follow:
\begin{definition}\cite{chen2017approximation}
The $ \alpha $-Bernstein polynomials  of degree $n$, $B_{n,k}^{(\alpha)}(z)$, are defined by $B_{1,0}^{(\alpha)}(z)=1-z$, $B_{1,1}^{(\alpha)}(z)=z$ and
\begin{equation} 
B_{n,k}^{(\alpha)}(z) = \left[ \binom{n-2}{k}(1-\alpha)z + \binom{n-2}{k-2}(1-\alpha)\left(1-z\right) + \binom{n}{k}\alpha  z(1-z) \right]z^{k-1}(1-z)^{n-k-1}   \label{chenformula}
\end{equation}
where $n \geq 2$, $z \in[0, 1]$ and $\binom{m}{l}$ denotes  the binomial coefficients.
\end{definition}

Then, the corresponding $ \alpha $-Bernstein operators are defined to read
\[ \mathcal{B}_n^{\alpha}(f;z) =\sum_{k=0}^n B_{n,k}^{\alpha}(z)f(\frac{k}{n}),  ~~~ z\in [0,1],\] 
for any $f \in  C[0,1]$, i.e. continuous functions.

In recent years, the field of approximation theory has experienced noteworthy advancements, propelled in part by the pioneering work of Chen and his colleagues. Their contributions to the theory of Bernstein-like operators have not only broadened our comprehension but also acted as a catalyst for subsequent exploration and refinement. This seminal work has sparked significant interest and inspired further investigations into various families of Bernstein-like operators \cite{mohiuddine2017construction, kajla2018blending, mohiuddine2020approximation, ozger2020approximation}.


The $\alpha$-Bernstein operators are commonly identified as generalized Bernstein operators in the literature, and researchers have further developed the underlying concept of this modification to generalize other prominent operators in the field.
Among these extensions, we can mention operators linked to Schurer polynomials, Kantorovich polynomials, Stancu polynomials, $q$-Bernstein polynomials, Durrmeyer polynomials, as well as other operators like Favard-Szász-Mirakjan and Baskakov, these  extensions, as well as other related works could be traced in \cite{    mohiuddine2017construction, ozger2020approximation,  mohiuddine2021approximation, aral2019parametric, cheng2023construction, kajla2019generalized}.

Inspired by Chen's research, this paper presents a novel framework for generating Bernstein-like basis functions. This innovative structure can be viewed as an extension of Bernstein polynomials, aligning with these classical polynomials and their $\alpha$-Bernstein modifications  \cite{chen2017approximation} in specific scenarios. This framework enables us to  introduce a recursive  representation of the  $\alpha$-Bernstein operators, a representation that promises to enhance our comprehension of its underlying principles and applications. By revisiting the operator's structure through a fresh lens, we aim to offer a unique perspective on its mathematical properties and computational implications.


Our primary focus lies in presenting this new representation and demonstrating its utility through  alternative  proofs for selected theorems associated with $\alpha$-Bernstein operators.
Moreover, it provides us with the opportunity to uncover some additional properties of $\alpha$-Bernstein operators.


The structure of the rest of this paper is as follows: Section 2 delineates the novel representation and validates its equivalence with Chen's formula. Section 3 examines the properties of this new presentation within the context of the Bernstein-like bases. In Section 4, on one hand, two novel properties of the $\alpha$-Bernstein operators are derived, and subsequently, we provide alternative proofs for key theorems, illustrating the practical implications of our proposed representation. Finally, Section 5 presents concluding remarks and proposes potential avenues for future research.

\section{A general structure for constructing Bernstein-like bases}

The idea of recursively constructing Bernstein-like basis functions has already been employed by researchers,
 Yan and Liang \cite{yan2011extension} utilized this approach to construct a Bezier-like model and  Li \cite{li2018novel} extended the idea to define a parameter-based Bernstein-like basis, suitable for constructing Bezier-type curves. Additionally, Bibi et al. \cite{bibi2020geometric} introduced the so-called \textit{generalized hybrid trigonometric Bernstein basis}, and Ameer and his colleagues \cite{ameer2022novel} developed generalized Bernstein-like basis functions. Both of these studies further contribute to the recursive construction of Bernstein-like bases.

The idea is straightforward: identify an appropriate set of initial bases, such as $\{F_{2,0},F_{2,1},F_{2,2}\}$, and generate higher-order bases through the established recursive relation of Bernstein polynomials \cite{farin2002curves}:
  \begin{equation}
F_{n,i}(z) = (1-z)F_{n-1,i}(z) + zF_{n-1,i-1}(z) \quad \text{for } z \in [0,1], \quad i = 0, 1, 2, \ldots, n. \label{rec-rel}
\end{equation}

The count of initial bases, hereafter referred to as the \textit{starting basis}, can be any natural number. Yan and Liang \cite{yan2011extension} and Ameer et al. \cite{ameer2022novel} both employed three polynomials of degree 4, Li \cite{li2018novel} selected four polynomials of degree 3 as the starting basis, and Bibi et al. \cite{bibi2020geometric} utilized three hybrid trigonometric functions. It is essential for the starting basis functions to exhibit as many fundamental properties of Bernstein bases as possible.

We stick to this technique and propose  a general structure to define a family of Bernstein-like bases.

\begin{definition} \label{defphi}
The starting Bernstein-like basis functions of order two are  defined  for $z \in [0,1]$ as 
\begin{eqnarray} 
g_{2,0}(z)&=&az+b+\varphi(z),\nonumber \\
g_{2,1}(z)&=&1-2b-a-2\varphi(z), \label{formula-phi}\\
g_{2,2}(z)&=&-az+b+a+\varphi(z),\nonumber
\end{eqnarray} 
where $a,b \in \mathbb{R}$  and the real valued function $\varphi$ must satisfy some conditions. The higher order Bernstein-like bases are  generated by recursive relation (\ref{rec-rel}).
\end{definition}

\begin{lemma}\label{lem1} 
 The Bernstein-like basis functions of order \(n \geq 2\), as defined in Definition \ref{defphi}, satisfy the partition of unity property, expressed as \( \sum_{i=0}^{n}g_{n,i}(z)=1 \). Additionally, if the function \(\varphi\) is constrained to satisfy \(\varphi(1-z)=\varphi(z)\), then a symmetry feature emerges: \( g_{n,i}(z)=g_{n,n-i}(1-z) \) for \( i = 0, 1, 2, \ldots , n \).

\end{lemma}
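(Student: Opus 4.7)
My plan is to prove both assertions by induction on $n$, relying on the recursion (\ref{rec-rel}) at the inductive step. The partition of unity is the first goal, and the symmetry follows by an almost identical induction once the base case is handled.

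For the partition of unity, the base case $n=2$ reduces to a direct computation: summing the three expressions in (\ref{formula-phi}), the $\pm az$ terms cancel, the $\varphi$ contributions give $\varphi(z) - 2\varphi(z) + \varphi(z) = 0$, and the constants collapse to $b + (1-2b-a) + (b+a) = 1$. For the inductive step I would assume $\sum_{i=0}^{n-1} g_{n-1,i}(z) = 1$, adopt the convention $g_{n-1,-1} \equiv g_{n-1,n} \equiv 0$, and split
\[
\sum_{i=0}^{n} g_{n,i}(z) = (1-z)\sum_{i=0}^{n-1} g_{n-1,i}(z) + z\sum_{j=0}^{n-1} g_{n-1,j}(z) = (1-z) + z = 1,
\]
after reindexing $j = i-1$ in the second sum.

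For the symmetry claim, the base case $n=2$ amounts to three substitutions into (\ref{formula-phi}); this is the only place where the hypothesis $\varphi(1-z)=\varphi(z)$ is actually used. One checks, for example, that $g_{2,2}(1-z) = -a(1-z)+b+a+\varphi(1-z) = az+b+\varphi(z) = g_{2,0}(z)$, and similarly $g_{2,1}(1-z) = g_{2,1}(z)$, the third identity following by relabelling. For the inductive step, assume $g_{n-1,j}(z) = g_{n-1,n-1-j}(1-z)$ for every admissible $j$. Applying the recursion with argument $1-z$ and index $n-i$ yields
\[
g_{n,n-i}(1-z) = z\,g_{n-1,n-i}(1-z) + (1-z)\,g_{n-1,n-i-1}(1-z),
\]
and invoking the inductive hypothesis at the indices $n-i$ and $n-i-1$ converts the right-hand side into $z\,g_{n-1,i-1}(z) + (1-z)\,g_{n-1,i}(z)$, which is precisely the recursive definition of $g_{n,i}(z)$.

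The computation is essentially routine; the only point requiring care is bookkeeping of indices in the symmetry step, namely verifying that the substitution $i \mapsto n-i$, $z \mapsto 1-z$ swaps the two summands of the recursion consistently with the inductive hypothesis. I would therefore devote most of the written exposition to the $n=2$ base case of the symmetry statement, where the evenness hypothesis on $\varphi$ genuinely enters, and dispatch both inductive steps in a single displayed line each.
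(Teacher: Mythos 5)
Your proof is correct, and it follows exactly the induction-on-$n$ via the recursion (\ref{rec-rel}) that the paper itself relies on: the paper states Lemma \ref{lem1} without an explicit proof, but the identical inductive step for symmetry appears verbatim in its proof of Theorem \ref{thm1}(c). Your explicit treatment of the $n=2$ base case and the boundary convention $g_{n-1,-1}\equiv g_{n-1,n}\equiv 0$ only makes the argument more complete than what the paper records.
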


One can impose further restrictions on the  function $\varphi$ and the real values $a$ and $b$ in order to satisfy more fundamental properties of classical Bernstein polynomials. In fact,  the non-negativity and end-point coincidence with Bernstein bases are two unavoidable features.

\begin{example}
When  we set $a=-1$, $b=0$ and $\varphi(z)=z^2-z+1$, the  polynomials in equation (\ref{formula-phi}) reduce to the   Bernstein polynomials of degree 2, thus Definition \ref{defphi} constructs the classical Bernstein polynomials.
\end{example}

\begin{example}
Authors in \cite{nouri2022class} presented a special case of (\ref{formula-phi}) where they used the  function $\varphi (z)=\sqrt{\left ( 1-\nu  \right )\left ( z^{2}-z \right )+\frac{1}{4}}$, along with $a=-1$ and $b=\frac{1}{2}$. 
It is verified that the so-called $sq$-basis functions also satisfy the non-negativity and end-point interpolation properties. Furthermore, the corresponding operators exhibit the most common features of the classical Bernstein operators \cite{nouri2023family}.
\end{example}

\section{New representation of the $\alpha$-Bernstein polynomials}

Setting $a=-1$, $b=1$  and $\varphi(z)=\alpha (z^2-z)$, in equation (\ref{formula-phi}), leads us to a set of starting basis functions as follows:
\begin{eqnarray} 
F_{2,0}(z)&=&1-z+\alpha \left ( z^{2}-z \right ),\nonumber \\
F_{2,1}(z)&=&-2\alpha \left ( z^{2}-z \right ), \label{formula1}\\
F_{2,2}(z)&=&z+\alpha \left ( z^{2}-z \right ).\nonumber
\end{eqnarray} 
Now employing the recurrence relation (\ref{rec-rel}), one constructs a set of basis functions which its equivalence with $\alpha$-Bernstein polynomials of Chen \cite{chen2017approximation} are easily verified, thus we have:
\begin{theorem}

The procedure presented in Definition \ref{defphi}, along with starting basis functions (\ref{formula1}), generates the $\alpha$-Bernstein polynomials.
\end{theorem}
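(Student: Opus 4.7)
The plan is a straightforward induction on $n$. Since the family $\{F_{n,k}\}_{k=0}^{n}$ built by Definition \ref{defphi} is, by construction, uniquely determined by its values at the level $n=2$ together with the recurrence (\ref{rec-rel}), it suffices to show two things: (i) that $F_{2,k} = B_{2,k}^{(\alpha)}$ for $k=0,1,2$, and (ii) that Chen's polynomials themselves satisfy the Bernstein-type recurrence $B_{n,k}^{(\alpha)}(z) = (1-z)B_{n-1,k}^{(\alpha)}(z) + z\, B_{n-1,k-1}^{(\alpha)}(z)$ for $n \geq 3$. Once both facts are in hand, the induction closes immediately and $F_{n,k} \equiv B_{n,k}^{(\alpha)}$ for every $n \geq 2$.

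For (i), I would simply evaluate (\ref{chenformula}) at $n=2$ for each $k \in \{0,1,2\}$, adopting the standard convention that $\binom{m}{l}=0$ whenever $l<0$ or $l>m$. For each of these three values of $k$, exactly one of the three bracketed summands survives in a form that, once multiplied by the otherwise formal factor $z^{k-1}(1-z)^{1-k}$, simplifies after a few lines of algebra to $1-z+\alpha(z^2-z)$, $-2\alpha(z^2-z)$, and $z+\alpha(z^2-z)$ respectively, matching (\ref{formula1}) term by term.

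For (ii), I would substitute (\ref{chenformula}) into the sum $(1-z)B_{n-1,k}^{(\alpha)}(z) + z\, B_{n-1,k-1}^{(\alpha)}(z)$ and pull out the common factor $z^{k-1}(1-z)^{n-k-1}$. The three coefficient groups then collapse by three independent applications of Pascal's rule: $\binom{n-3}{k}+\binom{n-3}{k-1} = \binom{n-2}{k}$ for the $(1-\alpha)z$ term, $\binom{n-3}{k-2}+\binom{n-3}{k-3} = \binom{n-2}{k-2}$ for the $(1-\alpha)(1-z)$ term, and $\binom{n-1}{k}+\binom{n-1}{k-1} = \binom{n}{k}$ for the $\alpha z(1-z)$ term. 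The right-hand side reassembles to exactly $B_{n,k}^{(\alpha)}(z)$ as given by (\ref{chenformula}), which is the inductive step.

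The one point that will require extra care, and which I expect to be the main obstacle, is the boundary behaviour at $k=0$ and $k=n$. At those indices the formula (\ref{chenformula}) contains the formal factors $z^{-1}$ or $(1-z)^{-1}$, and it is only a genuine polynomial because the bracketed expression carries a compensating factor of $z$ or $(1-z)$ (for instance $B_{n,0}^{(\alpha)}(z) = (1-\alpha z)(1-z)^{n-1}$). These two indices must be checked separately, either by working with the explicit polynomial forms throughout or by tracking the cancellation of the singular factors by hand; for all interior indices $1 \le k \le n-1$ the uniform Pascal-rule computation in the previous paragraph goes through without modification.
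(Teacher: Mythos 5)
Your proposal is correct, and it fills in exactly what the paper leaves implicit: the paper offers no proof of this theorem at all, merely asserting that the equivalence with Chen's formula is ``easily verified,'' so your two-step induction (matching the $n=2$ case and checking that Chen's polynomials satisfy the recurrence via three applications of Pascal's rule, with the usual convention $\binom{m}{l}=0$ for $l<0$ or $l>m$) is the natural and complete way to carry out that verification. One small inaccuracy in your description of step (i): for $k=0$ and $k=2$ it is not true that exactly one bracketed summand survives --- two of the three do (e.g.\ for $k=0$ both $\binom{0}{0}(1-\alpha)z$ and $\binom{2}{0}\alpha z(1-z)$ remain, giving $(1-\alpha z)(1-z)=1-z+\alpha(z^2-z)$) --- but the final expressions you state are the correct ones, so this does not affect the validity of the argument.
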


The recursive definition of the $\alpha$-Bernstein polynomials provides an opportunity to derive practical computational formulas, one of which is articulated in the following lemma.

\begin{lemma}\label{lemsigma}
The  $\alpha $-Bernstein polynomials, defined by equations (\ref{rec-rel}) and (\ref{formula1}), can be derived through the following relation:
\begin{eqnarray}\label{sigma-eq}
F_{n,i}(z) =\sum_{j=0}^{n-2}B_{n-2,j}(z)F_{2,i-j}(z),  ~~i=0,1,\cdots,n,
\end{eqnarray}
where  $ B_{n-2,j}  $  are the classical Bernstein polynomials of order $n-2$.
\end{lemma}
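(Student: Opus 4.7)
The plan is to establish (\ref{sigma-eq}) by induction on $n \geq 2$. The base case $n = 2$ is immediate: the right-hand side collapses to $B_{0,0}(z) F_{2,i}(z) = F_{2,i}(z)$ since $B_{0,0} \equiv 1$ and the summation index takes only the value $j = 0$.

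For the inductive step, assume (\ref{sigma-eq}) holds at level $n-1$. Applying the defining recursion (\ref{rec-rel}) yields
\[
F_{n,i}(z) = (1-z)F_{n-1,i}(z) + zF_{n-1,i-1}(z),
\]
and substituting the inductive hypothesis into each term produces two sums indexed by $j = 0, \ldots, n-3$. After reindexing the second sum via $j \mapsto j-1$, so that both sums feature $F_{2,i-j}(z)$ with the same running index, the coefficient of $F_{2,i-j}(z)$ in the merged sum is
\[
(1-z)B_{n-3,j}(z) + z B_{n-3,j-1}(z),
\]
which by the classical Bernstein recurrence equals $B_{n-2,j}(z)$. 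The merged index then ranges over $j = 0, \ldots, n-2$, giving exactly (\ref{sigma-eq}) at level $n$.

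The only real care required is bookkeeping at the boundary indices. One adopts the standard conventions $F_{n-1,-1} = F_{n-1,n} = 0$ (implicit in (\ref{rec-rel})), $B_{m,j} \equiv 0$ for $j \notin \{0,\ldots,m\}$, and $F_{2,k} \equiv 0$ for $k \notin \{0,1,2\}$. With these conventions in force, the two post-hypothesis sums merge cleanly, the terms that fall outside the natural range vanish automatically, and the Bernstein recurrence can be invoked at every $j$ in the combined sum. I do not anticipate any genuine obstacle beyond tracking these boundary conventions, and the induction closes directly.
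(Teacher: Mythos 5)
Your proof is correct and follows essentially the same route as the paper's: induction on $n$, substitution of the inductive hypothesis into the recursion (\ref{rec-rel}), a reindexing of the second sum, and the classical Bernstein recurrence $(1-z)B_{n-3,j}+zB_{n-3,j-1}=B_{n-2,j}$ to merge the coefficients. The only cosmetic difference is that you start the induction at $n=2$ and handle the boundary terms via zero conventions, whereas the paper starts at $n=3$ and peels off the boundary terms explicitly; both are fine.
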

\begin{proof}
We utilize the method of induction to establish the validity of the result, initiating with the base case  $n = 3$:
\begin{eqnarray*}
F_{n,0}(z) &=&\sum_{j=0}^{1}B_{1,j}(z)F_{2,-j}(z)=B_{1,0}(z)F_{2,0}(z)=(1-z)F_{2,0}(z),\\
F_{n,1}(z) &=&\sum_{j=0}^{1}B_{1,j}(z)F_{2,1-j}(z)=B_{1,0}(z)F_{2,1}(z)+B_{1,1}(z)F_{2,0}(z)=(1-z)F_{2,1}(z)+zF_{2,0}(z),\\
F_{n,2}(z) &=&\sum_{j=0}^{1}B_{1,j}(z)F_{2,2-j}(z)=B_{1,0}(z)F_{2,2}(z)+B_{1,1}(z)F_{2,1}(z)=(1-z)F_{2,2}(z)+zF_{2,1}(z),\\
F_{n,3}(z) &=&\sum_{j=0}^{1}B_{1,j}(z)F_{2,3-j}(z)=B_{1,1}(z)F_{2,2}(z)=zF_{2,2}(z),\\
\end{eqnarray*}
the desired outcome is achieved.

Now, assuming the validity of the relation for $ n-1 $, we obtain:
\begin{eqnarray*}
F_{n-1,i}(z) =\sum_{j=0}^{n-3}B_{n-3,j}(z)F_{2,i-j}(z), ~~ i=0,1,\cdots,n-1,
\end{eqnarray*}
In the next step, we aim to establish the validity of the relation for  $n$. Utilizing the recursive relation (\ref{rec-rel}), for  $i=0,1,\cdots,n$, we have:
\begin{eqnarray*}
F_{n,i}(z)& =& (1-z)F_{n-1,i}(z) + zF_{n-1,i-1}(z)\\
&=&(1-z)\sum_{j=0}^{n-3}B_{n-3,j}(z)F_{2,i-j}(z)+ z\sum_{j=0}^{n-3}B_{n-3,j}(z)F_{2,i-1-j}(z)\\
&=&(1-z)B_{n-3,0}(z)F_{2,i-0}(z)+\sum_{j=1}^{n-3}(1-z) B_{n-3,j}(z)F_{2,i-j}(z)\\
&&~~~~~~~+ \sum_{j=1}^{n-3}zB_{n-3,j-1}(z)F_{2,i-j}(z)+zB_{n-3,n-3}(z)F_{2,i-(n-2)}(z)\\
&=&(1-z)B_{n-3,0}(z)F_{2,i-0}(z)+\sum_{j=1}^{n-3}[(1-z) B_{n-3,j}(z)+zB_{n-3,j-1}(z) ]F_{2,i-j}(z)\\
  &&    ~~~~~~~+zB_{n-3,n-3}(z)F_{2,i-(n-2)}(z)\\
&=&B_{n-2,0}(z)F_{2,i-0}(z)+\sum_{j=1}^{n-3}B_{n-2,j}(z)F_{2,i-j}(z)+B_{n-2,n-2}(z)F_{2,i-(n-2)}(z)\\
&=&\sum_{j=0}^{n-2}B_{n-2,j}(z)F_{2,i-j}(z).
\end{eqnarray*}

\end{proof}

\begin{remark}
For any given value of \(n\), the right-hand side of (\ref{sigma-eq}) has at most  three non-zero terms, providing a computational advantage. Moreover, relation (\ref{sigma-eq}) can be represented in a matrix form as follows:
\begin{eqnarray*}
 \begin{bmatrix}
F_{n,0}(z)\\ 
F_{n,1}(z)\\ 
\vdots \\
F_{n,n}(z)
\end{bmatrix}=
\begin{bmatrix}
B_{n-2,0}(z) & 0 &0 \\ 
B_{n-2,1}(z) & B_{n-2,0}(z) & 0 \\ 
B_{n-2,2}(z) & B_{n-2,1}(z) & B_{n-2,0}(z) \\ 
B_{n-2,3}(z) & B_{n-2,2}(z) & B_{n-2,1}(z)  \\ 
\vdots  &  \vdots & \vdots \\ 
B_{n-2,n-3}(z) & B_{n-2,n-4}(z)  & B_{n-2,n-5}(z) \\ 
B_{n-2,n-2}(z) & B_{n-2,n-3}(z)  & B_{n-2,n-3}(z) \\ 
0 & B_{n-2,n-2}(z) & B_{n-2,n-3}(z) \\ 
 0& 0 & B_{n-2,n-2}(z)
\end{bmatrix}
\begin{bmatrix}
F_{2,0}(z)\\ 
F_{2,1}(z)\\ 
F_{2,2}(z)
\end{bmatrix}.
\end{eqnarray*}
%
\end{remark}

From the proof of Lemma \ref{lemsigma}, the following fact is straightforward:

\begin{lemma} \label{rec-lem}
For any set of basis functions $\{F_{n,i}\}_{i=0}^n$, which are constructed using the recursive relation (\ref{rec-rel}), any basis of order $n$ could be represented in terms of the basis elements of order $m <n$, i.e.
\begin{eqnarray} \label{sigma}
F_{n,i}(z) =\sum_{j=0}^{n-m}B_{n-m,j}(z)F_{m,i-j}(z), ~~~ i=0,\cdots, n.
\end{eqnarray}

\end{lemma}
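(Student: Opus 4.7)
The plan is to mirror the induction argument used in the proof of Lemma \ref{lemsigma}, but with the starting level generalized from $2$ to an arbitrary $m < n$. The formula (\ref{sigma}) reduces to (\ref{sigma-eq}) when $m=2$, so the same structure of argument should transfer verbatim; in particular I would fix $m \geq 2$ and induct on $n$.

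For the base case, two equivalent options are available. At $n = m$, the claimed identity degenerates to $F_{m,i}(z) = B_{0,0}(z) F_{m,i}(z)$, which is immediate since $B_{0,0} \equiv 1$. Alternatively, at $n = m+1$ the right-hand side of (\ref{sigma}) reads $B_{1,0}(z) F_{m,i}(z) + B_{1,1}(z) F_{m,i-1}(z) = (1-z)F_{m,i}(z) + z F_{m,i-1}(z)$, which is precisely the defining recurrence (\ref{rec-rel}).

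For the inductive step I would assume the identity holds at level $n-1$ for all indices $i$, namely $F_{n-1,i}(z) = \sum_{j=0}^{n-1-m} B_{n-1-m,j}(z) F_{m,i-j}(z)$, and then apply (\ref{rec-rel}) to write $F_{n,i}(z) = (1-z) F_{n-1,i}(z) + z F_{n-1,i-1}(z)$. Substituting the inductive hypothesis into each summand, isolating the $j=0$ term in the first sum and the $j = n-1-m$ term in the second, and re-indexing the shifted sum so that both run over the same range of $F_{m,i-j}(z)$ allows the coefficient of $F_{m,i-j}(z)$ (for intermediate $j$) to be collected as $(1-z)B_{n-1-m,j}(z) + z B_{n-1-m,j-1}(z)$, which equals $B_{n-m,j}(z)$ by the classical Bernstein recurrence. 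The two boundary terms reassemble into $B_{n-m,0}(z) F_{m,i}(z)$ and $B_{n-m,n-m}(z) F_{m,i-(n-m)}(z)$, completing the sum from $j=0$ to $j=n-m$.

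There is no real conceptual obstacle here; the only care needed is in the index bookkeeping at the two endpoints, which I would streamline by adopting the convention $B_{n-m-1,-1} \equiv B_{n-m-1,n-m} \equiv 0$ so that the boundary cases are absorbed into the main sum and the step reduces to a single application of the Bernstein recurrence. Because this is exactly the calculation already displayed in the proof of Lemma \ref{lemsigma} with $2$ replaced by $m$, the remark in the excerpt that the result is ``straightforward'' from that proof is justified, and the full write-up should be essentially a one-paragraph induction.
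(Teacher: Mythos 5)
Your proposal is correct and follows exactly the route the paper intends: the paper gives no separate proof of this lemma, stating only that it is straightforward from the proof of Lemma \ref{lemsigma}, and your argument is precisely that induction with the starting level $2$ replaced by $m$. The base case, the re-indexing, and the use of the classical Bernstein recurrence to collect coefficients all match the displayed computation in Lemma \ref{lemsigma}, so nothing further is needed.
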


\begin{remark}
 Lemma \ref{rec-lem} provides a universal formula for expressing the basis functions in terms of  the starting basis functions, irrespective of the number of starting bases employed.
\end{remark}

The forthcoming result demonstrates that the $\alpha$-Bernstein polynomials possess many of the properties of classical Bernstein polynomials, making them well-suited for function approximation.

\begin{theorem}\label{thm1} 
The $\alpha$-Bernstein polynomials  derived from equations  (\ref{rec-rel}) and (\ref{formula1}) exhibit the following  properties:
\begin{itemize}
\item [\rm{(a)}] Nonnegativity: $ F_{n,i}(z) \geq 0$  for $ i = 0, 1, 2,\ldots , n$.
\item [\rm{(b)}] Partition of unity: $ \sum_{i=0}^{n}F_{n,i}(z)=1 $.
\item [\rm{(c)}] Symmetry: $ F_{n,i}(z)=F_{n,n-i}(1-z) $ for $ i = 0, 1, 2, \ldots , n $. 
\item [\rm{(d)}] End-point values:
\begin{eqnarray}
F_{n,i}(0)= 
\begin{cases} 
1, & i=0,
\\ 
0, & i\neq 0,
\end{cases}
\hspace{0.7cm}
F_{n,i}(1)= 
\begin{cases} 
1, & i=n,
\\ 
0, & i\neq n.
\end{cases}
\label{end-val}
\end{eqnarray}
\end{itemize}
\end{theorem}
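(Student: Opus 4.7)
The plan is to reduce every part of Theorem \ref{thm1} to results already established. First, observe that the starting basis (\ref{formula1}) is precisely the instance of Definition \ref{defphi} corresponding to $a=-1$, $b=1$, $\varphi(z)=\alpha(z^{2}-z)$. A quick check shows $\varphi(1-z)=\alpha((1-z)^{2}-(1-z))=\alpha(z^{2}-z)=\varphi(z)$, so $\varphi$ is symmetric about $z=1/2$. Consequently Lemma \ref{lem1} applies verbatim, delivering both the partition of unity (b) and the symmetry (c) with no additional work.

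For nonnegativity (a) I would first rewrite the starting basis in factored form,
\begin{eqnarray*}
F_{2,0}(z) &=& (1-z)(1-\alpha z),\\
F_{2,1}(z) &=& 2\alpha\, z(1-z),\\
F_{2,2}(z) &=& z\bigl(1-\alpha(1-z)\bigr),
\end{eqnarray*}
which are all nonnegative on $[0,1]$ provided $\alpha\in[0,1]$ (the standard parameter range for these operators). For $n\ge 3$ I would then invoke Lemma \ref{lemsigma}: the representation
$$F_{n,i}(z)=\sum_{j=0}^{n-2}B_{n-2,j}(z)\,F_{2,i-j}(z)$$
exhibits $F_{n,i}$ as a nonnegative combination of the $F_{2,k}$, since the classical Bernstein polynomials $B_{n-2,j}$ are themselves nonnegative on $[0,1]$. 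A direct induction using the recurrence (\ref{rec-rel}), whose weights $z$ and $1-z$ are nonnegative, would work equally well.

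For the endpoint values (d) I would again use Lemma \ref{lemsigma} under the natural convention $F_{2,k}\equiv 0$ whenever $k\notin\{0,1,2\}$. Direct evaluation yields $F_{2,0}(0)=1$, $F_{2,1}(0)=F_{2,2}(0)=0$, and symmetrically $F_{2,2}(1)=1$, $F_{2,0}(1)=F_{2,1}(1)=0$. Since $B_{n-2,j}(0)=\delta_{j,0}$ and $B_{n-2,j}(1)=\delta_{j,n-2}$, only a single term of (\ref{sigma-eq}) survives at each endpoint, giving $F_{n,i}(0)=F_{2,i}(0)$ and $F_{n,i}(1)=F_{2,i-(n-2)}(1)$; these equal $1$ exactly when $i=0$ and $i=n$ respectively, and vanish otherwise, matching (\ref{end-val}).

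No real obstacle arises, since all four parts are routine consequences of Lemmas \ref{lem1} and \ref{lemsigma}. The only points meriting attention are the tacit assumption $\alpha\in[0,1]$ required for (a), and the need to handle out-of-range indices $i-j\notin\{0,1,2\}$ consistently (as zero) when applying (\ref{sigma-eq}) at the endpoints in (d).
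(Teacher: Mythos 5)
Your proof is correct, and it differs from the paper's in how it organizes the work. The paper proves (a) by simply declaring nonnegativity ``obvious'' from (\ref{rec-rel}) and (\ref{formula1}), proves (b) by citing Lemma \ref{lem1}, and proves (c) and (d) by separate inductions on the recurrence (\ref{rec-rel}). You instead pull both (b) and (c) out of Lemma \ref{lem1} after checking $\varphi(1-z)=\varphi(z)$, which is cleaner and avoids redoing the symmetry induction; you handle (d) by evaluating the convolution formula (\ref{sigma-eq}) of Lemma \ref{lemsigma} at the endpoints using $B_{n-2,j}(0)=\delta_{j,0}$ and $B_{n-2,j}(1)=\delta_{j,n-2}$, where the paper runs another induction. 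Most valuably, for (a) you supply the factorizations $F_{2,0}(z)=(1-z)(1-\alpha z)$, $F_{2,1}(z)=2\alpha z(1-z)$, $F_{2,2}(z)=z\bigl(1-\alpha(1-z)\bigr)$, which make explicit that nonnegativity genuinely requires $\alpha\in[0,1]$ --- a hypothesis the paper leaves tacit while calling the claim obvious. Your two flagged caveats (the parameter range for (a), and the convention $F_{2,k}\equiv 0$ for $k\notin\{0,1,2\}$ when using (\ref{sigma-eq})) are exactly the right points to make explicit; there is no gap in your argument.
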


\begin{proof}
The proof of this theorem relies on the application of the  mathematical induction.
\begin{itemize}
\item [\rm{(a)}]
Non-negativity is obvious by  (\ref{rec-rel}) and  (\ref{formula1}).
\item [\rm{(b)}]
It is  already stated in Lemma \ref{lem1}.
\item [\rm{(c)}]
The starting basis functions exhibit symmetry. Let's assume that the $\alpha$-Bernstein polynomials of order $k$  hold  symmetry. Now, by considering this inductive hypothesis along with the recurrence relation (\ref{rec-rel}), we can conclude that
\begin{eqnarray*}
F_{k+1,i}(1-z)&=&(1-(1-z))F_{k,i}(1-z)+(1-z)F_{k,i-1}(1-z)\\
&=&zF_{k,k-i}(z)+(1-z)F_{k,k-i+1}(z)=F_{k+1,k-i+1}(z).
\end{eqnarray*}
\item [\rm{(d)}]
Based on a straightforward deduction from (\ref{formula1}), it can be concluded that the results in (\ref{end-val}) apply to the case when $n=2$.
\\
Let's assume that the properties at the endpoints are satisfied by the $\alpha$-Bernstein polynomials of order $k$. Consequently, by utilizing the inductive hypothesis and equation (\ref{rec-rel}), we can conclude that:
\begin{eqnarray}
F_{k+1,i}(0)&=&F_{k,i}(0)= 
\begin{cases} 
1 & i=0,
\\ 
0, & i\neq 0,
\end{cases} \nonumber
\\ 
F_{k+1,i}(1)&=&F_{k,i-1}(1)= 
\begin{cases} 
1 & i=k+1,
\\ 
0, & i\neq k+1.
\end{cases} \nonumber
\end{eqnarray}

\end{itemize}
\end{proof}
In Figure \ref{fig1}, an illustration of the $\alpha$-Bernstein polynomials  is presented. These functions are generated for varying values of $ \alpha $ (including $ 0,0.2, 0.4,0.6,0.8$ and $ 1 $), while the values of $ n $ are set to  $2, 3,$ and $4$.

\begin{figure}[ht]
\centering
\subfloat 
{\includegraphics[width=6.5cm]{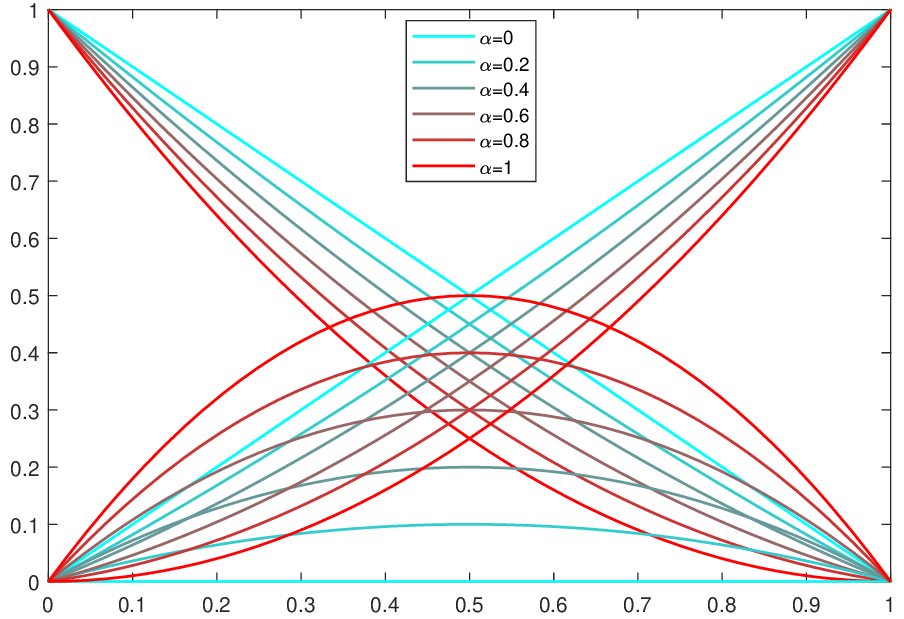}}
\\
\subfloat
{\includegraphics[width=6.5cm]{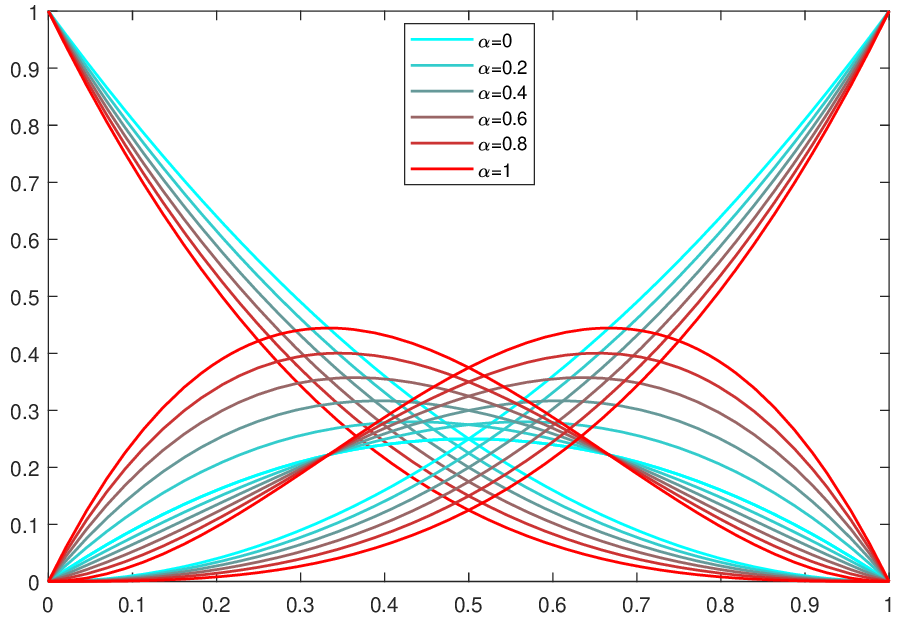}}
\hspace*{0.1mm}
\subfloat
{\includegraphics[width=6.5cm]{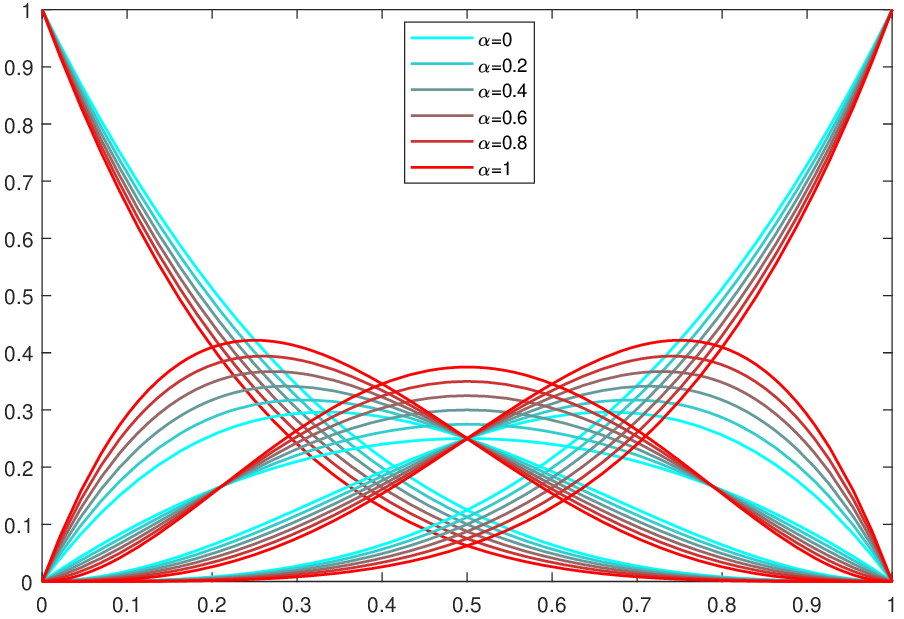}}
\caption{The $\alpha$-Bernstein polynomials  by adjusting $ \alpha $ from  $ 0, 0.2,\ldots,1$.} 
\label{fig1}
\end{figure}

\section{New results and alternative proofs}


This section is dedicated to showcasing the benefits of the new representation of $\alpha$-Bernstein operators. We introduce a new formula for computing the moments and present an analogue of Vornovskaja's theorem. Subsequently, we verify the shape-preserving properties of these operators using entirely different strategies.

\begin{theorem}
For all $j \in \mathbb{N}\cup \left \{ 0 \right \}, n \in \mathbb{N}$, $\alpha \in [0,1]$ and $  z \in \left [ 0,1 \right ] $, we have the recurrence
formula:
\begin{eqnarray*}
\mathcal{B}_n^{\alpha}\left ( z^{j+1};z \right )=\left ( 1-\frac{1}{n} \right )^{j+1}\left [ \left ( 1-z \right )\mathcal{B}_{n-1}^{\alpha}\left ( z^{j+1};z \right )+z\sum_{k=0}^{j+1} \binom{j+1}{k}\dfrac{1}{\left ( n-1 \right )^{k}} \mathcal{B}_{n-1}^{\alpha}\left ( z^{j+1-k};z \right )\right ].
\end{eqnarray*}
\end{theorem}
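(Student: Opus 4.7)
The plan is to exploit the recurrence relation (\ref{rec-rel}) for the new basis $F_{n,i}$ directly inside the definition of $\mathcal{B}_n^{\alpha}(z^{j+1};z)$, so that the desired identity drops out after an index shift and a binomial expansion. First I would write
\[
\mathcal{B}_n^{\alpha}(z^{j+1};z)=\sum_{i=0}^{n}F_{n,i}(z)\Bigl(\tfrac{i}{n}\Bigr)^{j+1}
=(1-z)\sum_{i=0}^{n}F_{n-1,i}(z)\Bigl(\tfrac{i}{n}\Bigr)^{j+1}
+z\sum_{i=0}^{n}F_{n-1,i-1}(z)\Bigl(\tfrac{i}{n}\Bigr)^{j+1},
\]
using (\ref{rec-rel}) and the convention that $F_{n-1,n}=F_{n-1,-1}=0$, so each sum truncates to $i=0,\ldots,n-1$ (resp.\ $i=1,\ldots,n$) and the indices become those of a genuine $(n-1)$-th order operator.

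For the first sum, I would factor $(i/n)^{j+1}=\bigl(\tfrac{n-1}{n}\bigr)^{j+1}\bigl(\tfrac{i}{n-1}\bigr)^{j+1}$, which immediately produces the term $\bigl(1-\tfrac{1}{n}\bigr)^{j+1}(1-z)\mathcal{B}_{n-1}^{\alpha}(z^{j+1};z)$. For the second sum, I would re-index by $m=i-1$, so that the factor becomes $\bigl(\tfrac{m+1}{n}\bigr)^{j+1}$, and then apply the binomial theorem
\[
(m+1)^{j+1}=\sum_{k=0}^{j+1}\binom{j+1}{k}m^{j+1-k}.
\]
Writing $m^{j+1-k}=(n-1)^{j+1-k}\bigl(\tfrac{m}{n-1}\bigr)^{j+1-k}$ and collecting the factor $\bigl(\tfrac{n-1}{n}\bigr)^{j+1}=\bigl(1-\tfrac{1}{n}\bigr)^{j+1}$ out of the whole expression turns the inner sum over $m$ into $\mathcal{B}_{n-1}^{\alpha}(z^{j+1-k};z)$, with the prefactor $\binom{j+1}{k}/(n-1)^{k}$ matching exactly the statement. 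Adding the two contributions gives the claimed recurrence.

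The argument is essentially bookkeeping, so the only real obstacle is tracking the two simultaneous substitutions cleanly: the index shift $i\mapsto i-1$ in the second sum, and the rescaling $(m/n)\mapsto (m/(n-1))$ via the factor $\bigl(\tfrac{n-1}{n}\bigr)^{j+1-k}$ that must be carefully split between the common prefactor $\bigl(1-\tfrac{1}{n}\bigr)^{j+1}$ and the coefficient $1/(n-1)^{k}$ inside the sum. Everything else is a direct application of (\ref{rec-rel}) and the binomial theorem, so no induction on $n$ or on $j$ is needed.
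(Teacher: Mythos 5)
Your proposal is correct and takes essentially the same route as the paper's own proof: substitute the recurrence (\ref{rec-rel}) into the definition of $\mathcal{B}_n^{\alpha}(z^{j+1};z)$, factor out $\left(\tfrac{n-1}{n}\right)^{j+1}$, and expand $\left(\tfrac{i-1+1}{n-1}\right)^{j+1}$ by the binomial theorem after shifting the index in the second sum. The only cosmetic difference is that you rename $m=i-1$ before expanding, whereas the paper writes $i-1+1$ in place; no induction is used in either argument.
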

\begin{proof}
Employing equation (\ref{rec-rel})  we derive:
\begin{eqnarray*}
\mathcal{B}_n^{\alpha}\left ( z^{j+1};z \right )&=& \sum_{i=0}^{n}\left ( \frac{i}{n} \right )^{j+1}F_{n,i}(z)\\
&=&\sum_{i=0}^{n}\left ( \frac{i}{n} \right )^{j+1}\left [ \left ( 1-z \right )F_{n-1,i}(z)+zF_{n-1,i-1}(z) \right ]\\
&=&\frac{\left ( n-1 \right )^{j+1}}{n^{j+1}}\left [ \left ( 1-z \right )\sum_{i=0}^{n}\left ( \frac{i}{n-1} \right )^{j+1}F_{n-1,i}(z)+z\sum_{i=0}^{n}\left ( \frac{i}{n-1} \right )^{j+1}F_{n-1,i-1}(z) \right ]\\
&=&\left ( 1-\frac{1}{n} \right )^{j+1}\left [ \left ( 1-z \right )\mathcal{B}_{n-1}^{\alpha}\left ( z^{j+1};z \right )+z\sum_{i=0}^{n}\left ( \frac{i-1+1}{n-1} \right )^{j+1}F_{n-1,i-1}(z)  \right ]\\
&=&\left ( 1-\frac{1}{n} \right )^{j+1}\left [ \left ( 1-z \right )\mathcal{B}_{n-1}^{\alpha}\left ( z^{j+1};z \right )+z\sum_{k=0}^{j+1} \binom{j+1}{k}\dfrac{1}{\left ( n-1 \right )^{k}} \sum_{i=0}^{n-1}\left ( \frac{i}{n-1} \right )^{j+1-k}F_{n-1,i}(z) \right ]\\
&=&\left ( 1-\frac{1}{n} \right )^{j+1}\left [ \left ( 1-z \right )\mathcal{B}_{n-1}^{\alpha}\left ( z^{j+1};z \right )+z\sum_{k=0}^{j+1} \binom{j+1}{k}\dfrac{1}{\left ( n-1 \right )^{k}} \mathcal{B}_{n-1}^{\alpha}\left ( z^{j+1-k};z \right )\right ].
\end{eqnarray*}
We utilized the outcome of the following relation in the above calculations.
\begin{eqnarray*}
\sum_{i=0}^{n}\left( \frac{i-1+1}{n-1} \right)^{j+1}F_{n-1,i-1}(z) 
&=&\sum_{i=0}^{n}\dfrac{1}{\left ( n-1 \right )^{j+1}}\left [\sum_{k=0}^{j+1}\binom{j+1}{k}\left ( i-1 \right )^{j+1-k}  \right ]F_{n-1,i-1}(z) \\
&=&\sum_{i=0}^{n} \left[\sum_{k=0}^{j+1}\binom{j+1}{k}\dfrac{1}{\left( n-1 \right)^{k}}\left( \dfrac{i-1}{n-1} \right)^{j+1-k}  \right]F_{n-1,i-1}(z) \\
&=&\sum_{k=0}^{j+1} \binom{j+1}{k}\dfrac{1}{\left( n-1 \right)^{k}} \sum_{i=0}^{n-1}\left( \frac{i}{n-1} \right)^{j+1-k}F_{n-1,i}(z). 
\end{eqnarray*}
\end{proof}

Next, we  present a Grüss-Voronovskaja-type theorem for the $\alpha$-Bernstein operators, utilizing the methodology outlined in \cite{gal2014gr}.
\begin{theorem}
Let $f, h \in C^{2}[0, 1]$,  then for any $z \in [0, 1]$, we have 
\begin{equation}
\underset{n\rightarrow \infty}{lim}~~n \left [\mathcal{B}_n^{\alpha}(fh; z) -\mathcal{B}_n^{\alpha}(f; z)\mathcal{B}_n^{\alpha}(h; z)  \right ]= z(1-z){f}'(z){h}'(z),
\end{equation}
where $ \alpha\in [0,1]$.
\end{theorem}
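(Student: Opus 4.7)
The plan is to reduce this Grüss–Voronovskaja statement to the ordinary Voronovskaja theorem for the $\alpha$-Bernstein operators via the classical Gal–Gonska algebraic decomposition. Recall that for $\mathcal{B}_n^{\alpha}$ the Voronovskaja formula (established in Chen \emph{et al.} \cite{chen2017approximation}) reads
\begin{equation*}
\lim_{n\to\infty} n\bigl[\mathcal{B}_n^{\alpha}(f;z)-f(z)\bigr]=\tfrac{z(1-z)}{2}f''(z),\qquad f\in C^{2}[0,1],
\end{equation*}
and the same holds for $h$ and for the product $fh$.

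The central identity I would use is
\begin{equation*}
\mathcal{B}_n^{\alpha}(fh;z)-\mathcal{B}_n^{\alpha}(f;z)\mathcal{B}_n^{\alpha}(h;z)
=\bigl[\mathcal{B}_n^{\alpha}(fh;z)-f(z)h(z)\bigr]
-\bigl[\mathcal{B}_n^{\alpha}(f;z)-f(z)\bigr]\mathcal{B}_n^{\alpha}(h;z)
-f(z)\bigl[\mathcal{B}_n^{\alpha}(h;z)-h(z)\bigr],
\end{equation*}
obtained by adding and subtracting $f(z)h(z)$ and regrouping. Multiplying by $n$ and letting $n\to\infty$, the last factor $\mathcal{B}_n^{\alpha}(h;z)$ tends to $h(z)$ (uniform convergence on $[0,1]$, which follows from partition of unity, non-negativity and Theorem~\ref{thm1} together with the Korovkin-type result for $\mathcal{B}_n^\alpha$). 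Applying the Voronovskaja formula to each of the three bracketed differences, the right-hand side converges to
\begin{equation*}
\tfrac{z(1-z)}{2}(fh)''(z)-\tfrac{z(1-z)}{2}f''(z)\,h(z)-f(z)\,\tfrac{z(1-z)}{2}h''(z).
\end{equation*}

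Finally, expanding $(fh)''=f''h+2f'h'+fh''$ the $f''h$ and $fh''$ contributions cancel against the two remaining terms, leaving exactly $z(1-z)f'(z)h'(z)$, which is the desired limit. The routine calculus step of expanding $(fh)''$ I would leave to the reader. The only real obstacle is justifying the two analytic inputs that feed the identity: (i) the Voronovskaja formula for $\mathcal{B}_n^\alpha$ applied simultaneously to $f$, $h$ and $fh$, and (ii) the uniform convergence $\mathcal{B}_n^{\alpha}(h;\cdot)\to h$; both are available from the established theory of $\alpha$-Bernstein operators, so the proof becomes a short algebraic verification built on those ingredients.
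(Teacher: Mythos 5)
Your proposal is correct and follows essentially the same route as the paper: the same Gal--Gonska add-and-subtract decomposition (yours is just the mirror-image grouping, with $\mathcal{B}_n^{\alpha}(h;z)$ rather than $\mathcal{B}_n^{\alpha}(f;z)$ as the surviving factor), followed by the Voronovskaja formula applied to $f$, $h$, $fh$, the Korovkin-type convergence for the remaining factor, and the expansion of $(fh)''$.
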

\begin{proof}
It is straightforward to express
\begin{eqnarray*}
&&\mathcal{B}_n^{\alpha}(fh; z) -\mathcal{B}_n^{\alpha}(f; z)\mathcal{B}_n^{\alpha}(h; z)\\
&=&\mathcal{B}_n^{\alpha}(fh; z)-f\left ( z \right )h\left ( z \right )+f\left ( z \right )h\left ( z \right ) -\mathcal{B}_n^{\alpha}(f; z)\mathcal{B}_n^{\alpha}(h; z)+\mathcal{B}_n^{\alpha}(f; z)h\left ( z \right )-\mathcal{B}_n^{\alpha}(f; z)h\left ( z \right )\\
&=&\left [ \mathcal{B}_n^{\alpha}(fh; z)-f\left ( z \right )h\left ( z \right ) \right ]+\left [ h\left ( z \right ) \Big ( f(z)-\mathcal{B}_n^{\alpha}(f; z)\Big)  \right ]+\left [ \mathcal{B}_n^{\alpha}(f; z) \Big ( h(z)-\mathcal{B}_n^{\alpha}(h; z)\Big)  \right ].
\end{eqnarray*}
Considering corresponding Korovkin \cite{korovkin1953convergence} and Vornovskaja \cite{voronovskaja1932determination}  theorems from  the original paper \cite{chen2017approximation}, we derive the following
\begin{eqnarray*}
&&
\underset{n\rightarrow \infty}{lim} n \left[\mathcal{B}_n^{\alpha}(fh; z) -\mathcal{B}_n^{\alpha}(f; z)\mathcal{B}_n^{\alpha}(h; z)\right]\\
&=&
\underset{n\rightarrow \infty}{lim}n \left [ \mathcal{B}_n^{\alpha}(fh; z)-f\left ( z \right )h\left ( z \right ) \right ]+\left [ h\left ( z \right )
 \Big (\underset{n\rightarrow \infty}{lim} n [f(z)-\mathcal{B}_n^{\alpha}(f; z)]\Big)  \right ]\\
 &&~~~~+\left [ 
\underset{n\rightarrow \infty}{lim} \mathcal{B}_n^{\alpha}(f; z) 
 \Big (\underset{n\rightarrow \infty}{lim}n[ h(z)-\mathcal{B}_n^{\alpha}(h; z)]\Big)  \right ]\\
&=&\dfrac{1}{2}z(1- z)(fh)''(z)- \dfrac{1}{2}z(1- z){f}''(z)h\left ( z \right )-\dfrac{1}{2}z(1- z){h}''(z)f\left ( z \right )\\
&=&\dfrac{1}{2}z(1- z)\left [ (fh)''(z) -{f}''(z)h\left ( z \right ) -{h}''(z)f\left ( z \right )\right ]=z(1-z){f}'(z){h}'(z).
\end{eqnarray*}
\end{proof}

\subsection{Monotonicity preservation}

\begin{definition}
A system of functions $ \{h_{0},\cdots,h_{n}\}$ is monotonicity preserving 
 if for any $\nu _{0}\leq \nu _{1} \leq  \cdots\leq \nu_{n} $ 
in $ \mathbb{R} $, the function $ \sum_{i=0}^{n}\nu  _{i}h_{i} $ is increasing. 
\end{definition}
The Proposition 2.3 of \cite{carnicer1996convexity} outlines the characterization of systems that preserve monotonicity, as presented in the following result. 
\begin{prop}\label{pro1}
Let $\{h_{0},\cdots,h_{n}\}$ be a system of functions defined on an interval $ [a,b]$. Let $ k_{i}:=\sum_{j=i}^{n}h_{j}  $ \ for
$ i \in \left \{  0,1,\cdots,n\right \}$. Then $ \{h_{0},\cdots,h_{n}\}$ is monotonicity preserving if and only if $ k_{0} $ is a constant function and the functions $ k_{i} $ are
increasing for $ i = 1,\cdots,n$. 

\end{prop}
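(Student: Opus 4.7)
The plan is to reduce the statement to an Abel summation identity that rewrites $\sum_{i=0}^{n}\nu_i h_i$ directly in terms of the cumulative sums $k_i$ and the nonnegative increments $\nu_j-\nu_{j-1}$. Specifically, using $\nu_i=\nu_0+\sum_{j=1}^{i}(\nu_j-\nu_{j-1})$ and interchanging the order of summation, I would establish the identity
\begin{equation*}
\sum_{i=0}^{n}\nu_i h_i(z) \;=\; \nu_0\, k_0(z) \;+\; \sum_{j=1}^{n}(\nu_j-\nu_{j-1})\, k_j(z).
\end{equation*}
This identity is the backbone of both directions, so I would verify it carefully first by a one-line Abel summation.

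For sufficiency, assume $k_0$ is constant and each $k_j$ with $j\geq 1$ is increasing. Given any $\nu_0\leq\nu_1\leq\cdots\leq\nu_n$, the increments $\nu_j-\nu_{j-1}$ are nonnegative, so each term $(\nu_j-\nu_{j-1})k_j$ in the identity above is increasing, while $\nu_0 k_0$ is constant. A sum of an increasing function and a constant is increasing, which gives the claim.

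For necessity, I would exploit the freedom to choose the $\nu_i$ cleverly. Taking $\nu_0=\nu_1=\cdots=\nu_n=c$ yields $\sum\nu_i h_i=c\,k_0$; since monotonicity preservation must hold for the sequences corresponding to both $c=1$ and $c=-1$, both $k_0$ and $-k_0$ are increasing, forcing $k_0$ to be constant. To pin down each $k_i$ with $i\geq 1$, I would use the step sequence $\nu_0=\cdots=\nu_{i-1}=0$ and $\nu_i=\cdots=\nu_n=1$, which is nondecreasing and produces $\sum_{j=0}^{n}\nu_j h_j=\sum_{j=i}^{n}h_j=k_i$; monotonicity preservation then yields $k_i$ increasing.

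The argument is essentially bookkeeping once the Abel identity is in hand, so there is no genuine obstacle; the only subtle point is the use of \emph{both} $c$ and $-c$ in the constant-sequence step to deduce that $k_0$ is constant (not merely monotone), which is what prevents the result from collapsing into a one-sided statement.
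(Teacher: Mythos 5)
Your argument is correct: the Abel summation identity $\sum_{i=0}^{n}\nu_i h_i = \nu_0 k_0 + \sum_{j=1}^{n}(\nu_j-\nu_{j-1})k_j$ is verified by a straightforward interchange of summation, the sufficiency direction follows since nonnegative multiples of increasing functions sum to an increasing function, and the necessity direction correctly isolates $k_0$ via the two constant sequences $c=\pm 1$ and each $k_i$ via the $0$--$1$ step sequence. Note, however, that the paper itself offers no proof of this proposition: it is quoted directly as Proposition 2.3 of Carnicer and Pe\~na's work on convexity-preserving systems, so there is no in-paper argument to compare against. Your proof is the standard summation-by-parts characterization and would serve as a self-contained substitute for the citation; the only point worth making explicit in a final write-up is that ``increasing'' is being read in the weak (non-decreasing) sense throughout, which is what makes the sufficiency step (a sum of non-decreasing functions plus a constant is non-decreasing) and the two-sided constant-sequence argument close up cleanly.
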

The subsequent result is derived from the preceding proposition.
\begin{prop}\label{pro3.2}
The $\alpha$-Bernstein polynomials $\{F_{n,0}, F_{n,1}, \cdots, F_{n,n}\}$, defined by equations (\ref{formula1}) and (\ref{rec-rel}),  are  monotonicity preserving.
\end{prop}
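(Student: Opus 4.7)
The plan is to verify the hypotheses of Proposition \ref{pro1} for the family $\{F_{n,0},\ldots,F_{n,n}\}$. Set $k_i^{(n)}(z):=\sum_{j=i}^{n}F_{n,j}(z)$. First, $k_0^{(n)}\equiv 1$ is immediate from the partition of unity established in Theorem \ref{thm1}(b), so the only substantive task is to show that $k_i^{(n)}$ is increasing on $[0,1]$ for every $i\in\{1,\ldots,n\}$.

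The key step is to exploit the recurrence (\ref{rec-rel}) at the level of the partial sums $k_i^{(n)}$. Summing (\ref{rec-rel}) over $j=i,\ldots,n$ (with the natural conventions $F_{n-1,-1}\equiv 0$ and $F_{n-1,n}\equiv 0$) gives
\begin{equation*}
k_i^{(n)}(z)=(1-z)\,k_i^{(n-1)}(z)+z\,k_{i-1}^{(n-1)}(z).
\end{equation*}
Differentiating and using $k_{i-1}^{(n-1)}(z)-k_i^{(n-1)}(z)=F_{n-1,i-1}(z)\geq 0$ (Theorem \ref{thm1}(a)), we obtain
\begin{equation*}
\bigl(k_i^{(n)}\bigr)'(z)=F_{n-1,i-1}(z)+(1-z)\bigl(k_i^{(n-1)}\bigr)'(z)+z\bigl(k_{i-1}^{(n-1)}\bigr)'(z).
\end{equation*}
This is the workhorse identity: each summand on the right is nonnegative provided the derivatives of the lower-order partial sums are nonnegative, which sets up a clean induction on $n$.

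The base case $n=2$ is handled by direct computation from (\ref{formula1}): one gets $k_1^{(2)}(z)=z\bigl(1+\alpha(1-z)\bigr)$ with $(k_1^{(2)})'(z)=1+\alpha-2\alpha z\geq 1-\alpha\geq 0$, and $k_2^{(2)}(z)=F_{2,2}(z)$ with $(k_2^{(2)})'(z)=(1-\alpha)+2\alpha z\geq 0$, both valid for $\alpha\in[0,1]$. For the inductive step one must check the two boundary indices: at $i=1$, the term $(k_{i-1}^{(n-1)})'$ vanishes because $k_0^{(n-1)}\equiv 1$; at $i=n$, the partial sum $k_n^{(n-1)}$ is the empty sum, so $(k_i^{(n-1)})'\equiv 0$. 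In both cases the remaining terms in the identity above are still nonnegative by the inductive hypothesis and Theorem \ref{thm1}(a), completing the induction.

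I do not anticipate a real obstacle; the main thing to be careful about is bookkeeping of the endpoint indices ($i=1$ and $i=n$) and the empty/zero conventions in the recurrence, since the general inductive step relies on having both $k_i^{(n-1)}$ and $k_{i-1}^{(n-1)}$ available with nonnegative derivatives. Once these are handled, Proposition \ref{pro1} immediately delivers monotonicity preservation of $\{F_{n,0},\ldots,F_{n,n}\}$.
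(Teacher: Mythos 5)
Your proposal is correct and follows essentially the same route as the paper: induction on $n$ via Proposition \ref{pro1}, the same base case computation for $n=2$, and the same key identity $\bigl(k_i^{(n)}\bigr)'(z)=F_{n-1,i-1}(z)+(1-z)\bigl(k_i^{(n-1)}\bigr)'(z)+z\bigl(k_{i-1}^{(n-1)}\bigr)'(z)$, which the paper obtains by differentiating and summing the recurrence for the $F_{n,j}$ directly (its equation (\ref{11})). Your explicit treatment of the boundary indices $i=1$ and $i=n$ is a minor tidying of bookkeeping the paper leaves implicit.
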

\begin{proof}
We use the mathematical  induction to verify the result. Consider the $\alpha$-Bernstein polynomials  defined by equation (\ref{rec-rel}) for $n=2$.  According to Proposition \ref{pro1}, the set $\{F_{2,0}, F_{2,1}, F_{2,2}\}$ preserves monotonicity under the conditions that $k_{0}$ is a constant function, and both $k_{1}$ and $k_{2}$ are increasing functions.

Utilizing the partition of unity of $\alpha$-Bernstein polynomials, we obtain
$k_{0}=\sum_{j=0}^{2}F_{2,j}(z)=1$.
On the other hand
\begin{equation*}
\sum_{j=1}^{2}F_{2,j}(z)=z-\alpha(z^{2}-z)\Rightarrow \dfrac{d}{dz} \Big(\sum_{j=1}^{2}F_{2,j}(z)\Big)=1-\alpha(2z-1)\geq 1-\alpha,
\end{equation*}
where $1-\alpha$ takes a non-negative value, therefore $\sum_{j=1}^{2}F_{2,j}(z)$ is an increasing function. Similarly, we show the monotonicity of the last function by taking derivative
\begin{equation*}
\sum_{j=2}^{2}F_{2,j}(z)=z+\alpha(z^{2}-z)\Rightarrow \dfrac{d}{dz} \Big(F_{2,2}(z)\Big)=1+\alpha(2z-1),
\end{equation*}
 where obviously $1+\alpha(2z-1) \geq 0$, so we have the desired result.

Now, we assume that  the $\alpha$-Bernstein polynomials  $\{F_{n-1,0}, F_{n-1,1}, \cdots, F_{n-1,n-1}\}$ are monotonicity preserving, and it remains to conclude the same result for any  $\alpha$-Bernstein polynomials of order $n$,  $\{F_{n,0}, F_{n,1}, \cdots, F_{n,n}\}$.

To do so, we first observe that $k_0=\sum_{j=0}^{n}F_{n,j}(z)=1$, i.e. is a constant function due to  partition of unity. 
For   $k_{i}(z)=\sum_{j=i}^{n}F_{n,j}(z)$, we show the monotonicity by verifying the non-negativity of its  derivative, 
 $ \dfrac{d}{dz}k_{i}(z)=\sum_{j=i}^{n} \dfrac{d}{dz}F_{n,j}(z)$.

Employing relation (\ref{rec-rel}), we have
\begin{eqnarray}
\dfrac{d}{dz}k_{i}(z)&=&-\sum_{j=i}^{n}F_{n-1,j}(z)+(1-z)\sum_{j=i}^{n}\dfrac{d}{dz} F_{n-1,j}(z)+\sum_{j=i}^{n}F_{n-1,j-1}(z)+z\sum_{j=i}^{n}\dfrac{d}{dz}F_{n-1,j-1}(z)\nonumber \\&=&F_{n-1,n}(z)+\left ( -\sum_{j=i}^{n-1}F_{n-1,j}(z)+\sum_{j=i}^{n-1}F_{n-1,j}(z) \right )+F_{n-1,i-1}(z)\nonumber \\&&+(1-z)\sum_{j=i}^{n}\dfrac{d}{dz} F_{n-1,j}(z)+z\sum_{j=i}^{n}\dfrac{d}{dz} F_{n-1,j-1}(z)\nonumber \\&=&F_{n-1,i-1}(z)+(1-z)\sum_{j=i}^{n-1}\dfrac{d}{dz} F_{n-1,j}(z)+z\sum_{j=i-1}^{n-1}\dfrac{d}{dz} F_{n-1,j}(z).\label{11}
\end{eqnarray}
Now, we use the induction hypothesis to see that
\begin{equation}
\sum_{j=i}^{n-1}\dfrac{d}{dz} F_{n-1,j}(z)\geq0, \hspace{1cm}\sum_{j=i-1}^{n-1}\dfrac{d}{dz} F_{n-1,j}(z)\geq0. \label{12}
\end{equation}
and  then it becomes evident that $ \dfrac{d}{dz} k _{i}(z)\geq0$.
\end{proof}

The monotonicity preservation of the $\alpha$-Bernstein operators follows directly from the preceding proposition.

\begin{theorem}
For a function $q$ that is continuous and monotonically increasing on the interval $[0,1]$, the corresponding $\alpha$-Bernstein operators are also increasing.
\end{theorem}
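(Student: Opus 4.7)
The plan is to observe that this theorem is an immediate corollary of Proposition \ref{pro3.2}, simply by unpacking the definition of the $\alpha$-Bernstein operator and the definition of monotonicity preservation.

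First I would set $\nu_i := q(i/n)$ for $i = 0, 1, \ldots, n$. Since the nodes $0/n < 1/n < \cdots < n/n$ form an increasing sequence in $[0,1]$, and $q$ is monotonically increasing on $[0,1]$, the sequence of sampled values satisfies $\nu_0 \leq \nu_1 \leq \cdots \leq \nu_n$. Next, I would invoke the defining formula of the $\alpha$-Bernstein operator to rewrite
\begin{equation*}
\mathcal{B}_n^{\alpha}(q;z) \;=\; \sum_{i=0}^{n} F_{n,i}(z)\, q\!\left(\tfrac{i}{n}\right) \;=\; \sum_{i=0}^{n} \nu_i\, F_{n,i}(z).
\end{equation*}

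By Proposition \ref{pro3.2}, the system $\{F_{n,0}, F_{n,1}, \ldots, F_{n,n}\}$ is monotonicity preserving, which by definition means that for every non-decreasing sequence $\nu_0 \leq \nu_1 \leq \cdots \leq \nu_n$ in $\mathbb{R}$, the function $\sum_{i=0}^{n} \nu_i F_{n,i}(z)$ is increasing on $[0,1]$. Applying this with the $\nu_i$ constructed above, one concludes that $\mathcal{B}_n^{\alpha}(q;z)$ is increasing, as required.

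There is essentially no obstacle here; the bulk of the work was already absorbed into the proof of Proposition \ref{pro3.2}, whose inductive argument via the recurrence (\ref{rec-rel}) established the non-trivial fact that $\frac{d}{dz}k_i(z) \geq 0$. The present theorem merely packages that result in operator form, so the proof will be only a few lines of bookkeeping.
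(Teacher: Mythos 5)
Your proposal is correct and follows exactly the same route as the paper: sample $q$ at the nodes $i/n$ to obtain a non-decreasing sequence of coefficients, write $\mathcal{B}_n^{\alpha}(q;z)$ as $\sum_{i=0}^{n}\nu_i F_{n,i}(z)$, and invoke Proposition \ref{pro3.2}. The paper's own proof is equally brief, so there is nothing to add.
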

\begin{proof}

Considering the monotonically increasing function $q$, we observe the inequality $q\left(\frac{0}{n}\right) \leq q\left(\frac{1}{n}\right) \leq \cdots \leq q\left(\frac{n}{n}\right)$. Consequently, the result is directly derived from Proposition \ref{pro3.2}.

\end{proof}

\subsection{Convexity preservation}
The preservation of convexity by the $\alpha$-Bernstein operators is verified by the subsequent theorem.

\begin{theorem}
 If $q \in C[0,1] $ is a convex function, then    its $\alpha$-Bernstein operators are all convex.
 
\end{theorem}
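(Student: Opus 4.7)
My plan is to lift the theorem to a sequence-level statement and then argue by induction on $n$. I would introduce the sequence operator $L_n^{\alpha}(c;z):=\sum_{i=0}^n c_i F_{n,i}(z)$ for $c=(c_0,\ldots,c_n)\in\mathbb{R}^{n+1}$ and aim to show that $z\mapsto L_n^{\alpha}(c;z)$ is convex on $[0,1]$ whenever the sequence $c$ is discretely convex, i.e.\ $c_{i-1}-2c_i+c_{i+1}\ge 0$ for $i=1,\ldots,n-1$. Once this discrete claim is proved, the theorem follows immediately: setting $c_i=q(i/n)$, midpoint convexity of $q$ yields $q(i/n)-2q((i+1)/n)+q((i+2)/n)\ge 0$, and $\mathcal{B}_n^{\alpha}(q;z)=L_n^{\alpha}((q(i/n))_i;z)$ must therefore be convex.

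For the base case $n=2$ I would substitute the starting basis (\ref{formula1}) to obtain
\[
L_2^{\alpha}(c;z)=(1-z)c_0+z c_2+\alpha(z^2-z)(c_0-2c_1+c_2),
\]
whose second derivative $2\alpha(c_0-2c_1+c_2)$ is non-negative because $\alpha\in[0,1]$ and $c$ is discretely convex. For the inductive step, the recurrence (\ref{rec-rel}) gives
\[
L_n^{\alpha}(c;z) = (1-z)\,f(z)+z\,g(z),
\]
where $f(z):=L_{n-1}^{\alpha}(c';z)$ and $g(z):=L_{n-1}^{\alpha}(c'';z)$ with $c':=(c_0,\ldots,c_{n-1})$ and $c'':=(c_1,\ldots,c_n)$. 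Both $c'$ and $c''$ are themselves discretely convex, so by the inductive hypothesis $f$ and $g$ are convex. Twice-differentiating then yields
\[
\bigl(L_n^{\alpha}(c;z)\bigr)'' = (1-z)f''(z)+z g''(z)+2\bigl(g'(z)-f'(z)\bigr),
\]
whose first two summands are manifestly non-negative on $[0,1]$.

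The decisive step, and the one I expect to be the main obstacle, will be disposing of the cross-term $g'-f'$; the key is to recognize that it is controlled by the \emph{monotonicity}-preservation property already in hand. Indeed,
\[
g(z)-f(z)=\sum_{i=0}^{n-1}(c_{i+1}-c_i)F_{n-1,i}(z)=L_{n-1}^{\alpha}(d;z), \qquad d_i:=c_{i+1}-c_i,
\]
and the discrete convexity of $c$ is precisely the statement that the forward-difference sequence $(d_i)$ is non-decreasing. Proposition \ref{pro3.2} then forces $L_{n-1}^{\alpha}(d;\,\cdot\,)$ to be an increasing function of $z$, whence $(g-f)'\ge 0$ and the induction closes. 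Apart from this identification, the remaining work is routine differentiation together with the $n=2$ base computation, so I anticipate no further substantive difficulties.
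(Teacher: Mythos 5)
Your proposal is correct and follows essentially the same route as the paper: induction on $n$ via the recurrence (\ref{rec-rel}), the direct second-derivative computation for the base case $n=2$, and an appeal to Proposition \ref{pro3.2} applied to the non-decreasing forward-difference sequence to control the cross term. Your explicit identity $\bigl(L_n^{\alpha}(c;z)\bigr)''=(1-z)f''+zg''+2(g'-f')$ in fact makes fully transparent a step the paper's proof leaves somewhat implicit when it asserts that the sum of the three pieces of $\tfrac{d}{dz}G$ is increasing.
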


\begin{proof}
We employ the method of induction to establish the result's validity, starting with the base case $n=2$.  Consider a set of real values $ \lambda_{0}, \lambda _{1} , \lambda _{2}$ forming a convex data set, ensuring $\lambda_{2} -2\lambda_{1} +\lambda_{0}\geq 0 $. Our objective is to demonstrate the convexity of the function $ G(z,\alpha)=\sum_{i=0}^{2}\lambda_{i}  F_{2,i}(z)$ over the interval $[0,1]$. Evaluating the second derivative, we get

\begin{equation*}
\dfrac{d^2}{dz^2} G(z,\alpha)=\sum_{i=0}^{2} \lambda_{i}  \dfrac{d^2}{dz^2}  F_{2,i}(z)= 2\alpha \left (\lambda_{2} -2\lambda_{1} +\lambda_{0}   \right )
\end{equation*}

Given this expression and $ \lambda_{2} -2\lambda_{1} +\lambda_{0}\geq 0 $, we can affirm that $ \dfrac{d^2}{dz^2} G(z,\alpha) \geq 0 $.

Presuming the convexity of a set of real values $ \{\beta_{i} \}_{i=0}^{n}$, we take as our induction hypothesis that the expression $ \sum_{i=0}^{n} \beta_{i} F_{n,i}(z) $ is convex.

Now, let's introduce a new set of convex values $ \{\eta_{i}\}_{i=0}^{n+1}$. The goal is to establish the convexity of the function $ \sum_{i=0}^{n+1} \eta_{i} F_{n+1,i}(z) $. To accomplish this, we consider
\begin{equation*} 
G(z,\alpha) =\sum_{i=0}^{n+1} \eta_{i} F_{n+1,i}(z)=\sum_{i=0}^{n+1} \eta_{i}\left [(1-z)F_{n,i}(z)+zF_{n,i-1}(z) \right ],  \end{equation*}
and establish that the function $ \dfrac{d}{dz}G(z,\alpha) $ is  increasing.
\begin{eqnarray*}
\dfrac{d}{dz} G(z,\alpha) &=&\sum_{i=0}^{n+1} \eta_{i} \dfrac{d}{dz} \left [(1-z)F_{n,i}(z)+zF_{n,i-1}(z)  \right ]\\
&=&\sum_{i=0}^{n+1} \eta_{i}\left [-F_{n,i}(z)+(1-z) \dfrac{d}{dz} F_{n,i}(z)+F_{n,i-1}(z)+z \dfrac{d}{dz} F_{n,i-1}(z)  \right ]\\
&=&\sum_{i=0}^{n} ( \eta_{i+1}-\eta_{i})F_{n,i}(z)+(1-z)\sum_{i=0}^{n}\eta_{i} \dfrac{d}{dz} F_{n,i}(z)+z \sum_{i=0}^{n}\eta_{i+1}\dfrac{d}{dz} F_{n,i}(z)
\end{eqnarray*}
Given that $\left \{ \eta_{i}  \right \}_{i=0}^{n+1}$ represents convex data, we observe that $  \eta_{i+1}-2 \eta_{i}+ \eta_{i-1}\geq 0, i=1,\cdots,n-1  $. This inequality implies $  \eta_{i+1}- \eta_{i}\geq \eta_{i}- \eta_{i-1} , i=1,\cdots,n-1  $. In simpler terms, the differences $ \eta_{i}- \eta_{i-1} , i=1,\cdots,n-1 $ form an increasing sequence. By applying Proposition \ref{pro3.2}, we can affirm that the function $ \sum_{i=0}^{n} ( \eta_{i+1}-\eta_{i})F_{n,i}(z) $ is an increasing function.

Assuming the induction hypothesis and given that $\left \{ \eta_{i} \right \}_{i=0}^{n+1}$ represents convex data, it follows that both $ \sum_{i=0}^{n}\eta_{i}F_{n,i}(z) $ and $ \sum_{i=0}^{n}\eta_{i+1}F_{n,i}(z) $ are convex functions. This implies that their respective derivatives, $ \sum_{i=0}^{n}\eta_{i} \dfrac{d}{dz} F_{n,i}(z)$ and $\sum_{i=0}^{n}\eta_{i+1} \dfrac{d}{dz} F_{n,i}(z) $, are increasing functions.

By the derived results and considering $ z \in [0,1] $, it follows that $ \dfrac{d}{dz} G(z,\alpha) $ is an increasing function. Consequently, this implies $ \dfrac{d^2}{dz^2} G(z,\alpha) \geq 0 $, signifying that $ G(z,\alpha) $ is a convex function.

We observe that if $ q $ is a convex function, then the values $ q\left(\frac{0}{n}\right), q\left(\frac{1}{n}\right), \ldots, q\left(\frac{n}{n}\right) $ form a set of convex data. Consequently, this directly implies that $ R_{n,\alpha}(q;z) $ is also a convex function.
\end{proof}

\section{Conclusion}

The current study makes two significant contributions to the field of approximation theory, particularly concerning Bernstein-like operators.

Firstly, a somewhat generalized formula is introduced for generating Bernstein-like functions. The study illustrates three specific families and opens avenues for constructing new and innovative families.

Secondly, it is demonstrated that the newly presented structure offers a recursive formula for generating $\alpha$-Bernstein polynomials. Utilizing this  representation, several properties are derived, and alternative proofs are provided for established results.





 \bibliographystyle{elsarticle-num} 
 \bibliography{ref}





\end{document}